\documentclass[british, 12pt, reqno]{amsart}
\usepackage[T1]{fontenc}
\usepackage[latin9]{inputenc}
\usepackage{amsmath, amsthm, amssymb, stmaryrd}

\usepackage{geometry}
\geometry{tmargin=3cm,bmargin=3cm,
lmargin=4cm,rmargin=4cm}

\setcounter{secnumdepth}{1}
\setcounter{tocdepth}{1}
\usepackage{amsmath}
\usepackage{amsthm}
\usepackage{amssymb}
\usepackage{xcolor}

\usepackage{enumerate}
\usepackage{hyperref}
\usepackage{graphicx}

\makeatletter
%%%%%%%%%%%%%%%%%%%%%%%%%%%%%% Textclass specific LaTeX commands.

\newtheorem{thm}{\protect\theoremname}[section]
\theoremstyle{plain}
\newtheorem{cor}[thm]{\protect\corollaryname}
\ifx\proof\undefined
\newenvironment{proof}[1][\protect\proofname]{\par
	\normalfont\topsep6\p@\@plus6\p@\relax
	\trivlist
	\itemindent\parindent
	\item[\hskip\labelsep\scshape #1]\ignorespaces
}{%
	\endtrivlist\@endpefalse
}
\providecommand{\proofname}{Proof}
\fi
\theoremstyle{remark}

\theoremstyle{plain}
\newtheorem{lem}[thm]{\protect\lemmaname}

\makeatother

\usepackage{babel}
\providecommand{\corollaryname}{Corollary}
\providecommand{\lemmaname}{Lemma}
\providecommand{\remarkname}{Remark}
\providecommand{\theoremname}{Theorem}

\numberwithin{equation}{section}
\numberwithin{figure}{section}
\theoremstyle{plain}
\theoremstyle{plain}

\def\1{{\textcolor{red} {1}}}
\def\d1{{\textcolor{red} {d-1}}}

\def \bN {\mathbb N}

% not allowed to define \fi because it's reserved

\def \le {\leqslant}

\def \ge {\geqslant}

\def \d {{\mathrm{d}}}

\def \ds1 {\mathds{1}}

\def \eps {{\varepsilon}}

\def \lam {{\lambda}}

\begin{document}

\author{Sam Chow \and Owen Jones}

\address{Mathematics Institute, Zeeman Building, University of Warwick, Coventry CV4 7AL, United Kingdom}
\email{sam.chow@warwick.ac.uk}

\address{Mathematics Institute, Zeeman Building, University of Warwick, Coventry CV4 7AL, United Kingdom}
\email{Owen.S.Jones@warwick.ac.uk}

\title{On the variance of the Fibonacci partition function}
\subjclass[2020]{11B39 (primary); 05A16, 05A17, 11B37, 11D85 (secondary)}
\keywords{Fibonacci numbers, partitions, recurrences}

\maketitle
\begin{abstract}
We determine the order of magnitude of the variance of the Fibonacci partition function. The answer is different to the most naive guess. The proof involves a diophantine system and an inhomogeneous linear recurrence.
\end{abstract}

\section{Introduction}

The \emph{Fibonacci partition function} $R(n)$ counts solutions to
\[
x_1 + \cdots + x_s = n,
\]
where $x_1 < \cdots < x_s$ are Fibonacci numbers. Its values form the sequence OEIS A000119. By Zeckendorf's theorem, which uniquely expresses a positive integer as a sum of non-consecutive Fibonacci numbers, we have
\[
R(n) \ge 1 \qquad (n \in \bN).
\]
Observe that $R(0) = 1$, since the empty sum vanishes, and that 
\[
R(n) = 0 \qquad (n < 0).
\]
Several recursive formulas have been provided over the years \cite{Car1968, Rob1996, Wei2016}. Most recently, Chow and Slattery \cite{CS2021} furnished a fast, practical algorithm to compute $R(n)$, based on the Zeckendorf expansion.

The summatory function
\[
A(H) = \sum_{n=0}^H R(n)
\]
was shown in \cite{CS2021} to have order of magnitude $H^\lam$, where
\[
\varphi = \frac{1+\sqrt5}{2},
\qquad
\lam = \frac{\log 2}
{\log \varphi}
\approx 1.44.
\]
Thus, the average $(H+1)^{-1} \sum_{n=0}^H R(n)$ behaves fairly nicely, having order of magnitude $H^{\lam - 1}$. Chow and Slattery \cite{CS2021} demonstrated that $\displaystyle \lim_{H \to \infty} H^{-\lam} A(H)$ does not exist. There it was also found that
$ \displaystyle
\lim_{m \to \infty} F_m^{-\lam} A(F_m)
$
does exist, and such limits were recently investigated in far more generality by Zhou \cite{Zhou}.

The Fibonacci partition function is highly erratic. A simple exercise reveals, perhaps surprisingly at first, that
\[
R(F_m - 1) = 1
\qquad (m \ge 2).
\]
At the other extreme, Stockmeyer \cite{Sto2008} showed that
\[
R(F_m^2 - 1) = F_m \qquad 
(m \ge 2),
\]
and moreover
\[
R(n) \le \sqrt{n+1}
\]
with equality if and only if we have $n = F_m^2 - 1$ for some $m \ge 2$.

In this article, we quantify the fluctuations of $R(n)$ by estimating the second moment
\[
V(H) := \sum_{n = 0}^H R(n)^2.
\]
By Cauchy--Schwarz, we have
\[
V(H) \ge H^{-1} A(H)^2 \gg H^{2 \lam - 1}.
\]
If the function $R(n)$ were not too erratic, then we would have 
$V(H) \asymp H^{2 \lam - 1}$. One might naively guess this if computers were less powerful, see Figure \ref{Vgraph1}.

\begin{figure}[!htb] 
\centering
\includegraphics[width=8cm]{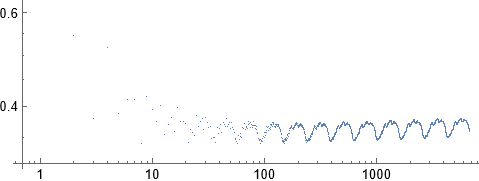}
\caption{$V(H)H^{1-2\lambda}$ against $H$ for $1 \le H \le 6765$}
\label{Vgraph1}
\end{figure}

However, the growth of $V(H)H^{1-2\lambda}$ becomes clearer with more data, see Figure \ref{Vgraph2}.

\begin{figure}[!htb] 
\centering
\includegraphics[width=8cm]{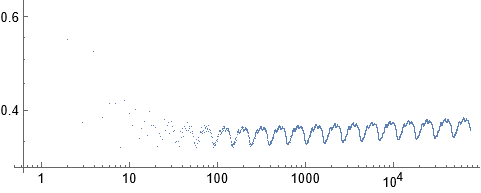}
\caption{$V(H)H^{1-2\lambda}$ against $H$ for $1 \le H \le 75025$}
\label{Vgraph2}
\end{figure}

We find that $V(H)$ grows like a slightly larger power of $H$.

\begin{thm} \label{MainThm}
Let $\lam_1 \approx 2.48$ be the greatest root of the polynomial 
\[
x^3 - 2 x^2 - 2 x + 2.
\]
Then, for $H \in \bN$, we have
\[
V(H) \asymp H^{(\log \lam_1)/ \log \varphi}.
\]
\end{thm}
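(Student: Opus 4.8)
The plan is to recast the second moment as a counting problem and read off its growth from a linear recurrence. Squaring turns $V(H)$ into a count of pairs: $V(H)$ equals the number of pairs $(S_1,S_2)$ of sets of distinct Fibonacci numbers with $\sum_{x\in S_1}x=\sum_{x\in S_2}x\le H$. Recording, at each Fibonacci number $F_i$, the indicator of membership in $S_1$ minus that in $S_2$ as a digit $e_i\in\{-1,0,1\}$ yields exactly the diophantine system $\sum_i e_iF_i=0$ flagged in the abstract; the number of pairs giving a fixed pattern $e$ is $2^{z}$, where $z=\#\{i:e_i=0\}$ (both sets contain $F_i$ or neither), while each $e_i=\pm1$ contributes a factor $1$. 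It is cleanest to first drop the constraint $\le H$. For this argument let $F_k$ denote the $k$-th \emph{distinct} Fibonacci number $1,2,3,5,\dots$ (an immaterial index shift from the paper's convention, and $F_k\asymp\varphi^k$). Let $R_k(n)$ count representations of $n$ using parts at most $F_k$, so that $R_k(n)=R(n)$ for $n<F_{k+1}$, and set $W_k:=\sum_n R_k(n)^2=N_k(0)$, where $N_k(t)$ is the weighted number of strings $(e_1,\dots,e_k)$ with $\sum_{i\le k}e_iF_i=t$.

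Next I would condition on the top digit $e_k$ to obtain
\[
N_k(t) = 2\,N_{k-1}(t) + N_{k-1}(t-F_k) + N_{k-1}(t+F_k).
\]
The crucial point is that a string $(e_1,\dots,e_{k-1})$ can only realise values of modulus at most $F_{k+1}-2$, so almost every target $t$ is infeasible and only finitely many states survive. Using $F_k=F_{k-1}+F_{k-2}$ one checks that the three quantities $g_k=N_k(0)$, $p_k=N_k(F_k)$, $q_k=N_k(F_{k+1})$ close up:
\[
g_k = 2g_{k-1}+2q_{k-1}, \qquad q_k = p_{k-1}, \qquad p_k = g_{k-1}+2q_{k-1},
\]
with transfer matrix $M=\begin{pmatrix}2&2&0\\0&0&1\\1&2&0\end{pmatrix}$, whose characteristic polynomial is precisely $x^3-2x^2-2x+2$. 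As $M$ is nonnegative and primitive and the initial data are positive, Perron--Frobenius yields $W_k=g_k\asymp\lam_1^{\,k}$. (Eliminating $p,q$ recovers the single recurrence $g_k=2g_{k-1}+2g_{k-2}-2g_{k-3}$.)

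Finally I would transfer this to $V(H)$. Since $R_k(n)=R(n)$ for $n<F_{k+1}$, the partial sum $V(F_{k+1}-1)=\sum_{n<F_{k+1}}R_k(n)^2\le W_k$, giving the upper bound at Fibonacci arguments. For the matching lower bound I would invoke the complementation symmetry $R_k(n)=R_k(F_{k+2}-2-n)$: reflecting the range $[0,F_{k+1})$ produces $[F_k-1,F_{k+2}-2]$, whose union with $[0,F_{k+1})$ is all of $[0,F_{k+2}-2]$, so $2V(F_{k+1}-1)\ge W_k$ and hence $V(F_{k+1}-1)\asymp\lam_1^{\,k}$. Monotonicity of $V$ then sandwiches a general $H$ with $F_m\le H<F_{m+1}$ between $V(F_m-1)$ and $V(F_{m+1}-1)$, both $\asymp\lam_1^{\,m}$; and since $F_m\asymp\varphi^m$ gives $m\asymp(\log H)/\log\varphi$, the exponent converts as $\lam_1^{\,m}\asymp H^{(\log\lam_1)/\log\varphi}$, which is the claim.

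The main obstacle is the middle step: establishing that the auxiliary counts $N_k(t)$ genuinely close into a finite-dimensional system, i.e. that the Fibonacci recurrence forces every target arising from $0$, $F_k$, $F_{k+1}$ to be either infeasible or again one of these three, and that the resulting matrix has exactly the advertised cubic as characteristic polynomial. A secondary subtlety is securing the lower bound rather than merely the upper bound — one must ensure the dominant eigenvalue is actually excited and not lost to cancellation — which is guaranteed here by the positivity of the counts (Perron--Frobenius) together with the symmetry argument above. I note finally that recursing directly on the size-constrained count $V(F_m-1)$ would, through the boundary terms, produce the \emph{inhomogeneous} linear recurrence mentioned in the abstract, with the same homogeneous part and hence the same growth rate $\lam_1$.
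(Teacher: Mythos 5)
Your proposal is correct, but it takes a genuinely different route from the paper. The paper never drops the size constraint: it interprets $V(F_m)-V(F_{m-1})$ as the number of solutions of $F_{m-1}<x_1+\cdots+x_s=y_1+\cdots+y_t\le F_m$ and runs a five-way case analysis on the pair of largest parts $(x_s,y_t)$, with an auxiliary count $w_m$ and Carlitz's formula $R(F_n)=\lfloor n/2\rfloor$, arriving at the inhomogeneous order-five recurrence $v_m=2v_{m-1}+3v_{m-2}-4v_{m-3}-2v_{m-4}+2v_{m-5}+1-2\lfloor m/2\rfloor$, whose characteristic polynomial factors as $(x-1)(x+1)(x^3-2x^2-2x+2)$; it then solves this exactly and deduces $V(F_m)\sim c_1\lambda_1^m$. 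You instead work with the unconstrained second moment via signed digit strings weighted by $2^{\#\{i\,:\,e_i=0\}}$ and condition on the top digit. Your flagged ``main obstacle'' is in fact fine: since a length-$(k-1)$ string realises sums of modulus at most $F_{k+1}-2$, the targets $F_{k+1}$, $F_{k+2}$ and $2F_k$ are all infeasible, and $N_{k-1}(-F_k)=N_{k-1}(F_k)=q_{k-1}$ by the symmetry $N(t)=N(-t)$, so the three states $(g,q,p)$ do close up exactly as you claim; your matrix $M$ indeed has characteristic polynomial $x^3-2x^2-2x+2$, and $M^3>0$, so Perron--Frobenius applies to the nonzero nonnegative initial vector $(g_1,q_1,p_1)=(2,0,1)$ and gives $W_k\asymp\lambda_1^k$. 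Your complementation bound $W_k\le 2V(F_{k+1}-1)\le 2W_k$ is also correct. What your route buys: the cubic appears directly rather than as a factor of a quintic, you need neither Carlitz's formula nor any inhomogeneous particular solution, and — a real advantage — the lower bound is structural, whereas the paper's asymptotic $V(F_m)\sim c_1\lambda_1^m$ rests on the nonvanishing of $c_1$, which is exhibited only numerically ($c_1\approx 0.0735$) via a Vandermonde inversion. What the paper's route buys: an exact closed formula for $V(F_m)$ and the precise asymptotic constant, which your $\asymp$-level argument (with its factor-of-two loss from the reflection) does not deliver without further work. Two trivial points to tidy: the closure relations need $k$ large enough that $F_{k-2}$ exists (small cases are absorbed into the implied constants), and in the final sandwich you should say $V(F_m-1)\asymp\lambda_1^{m-1}\asymp\lambda_1^{m}$; neither affects correctness.
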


\noindent The exponent
\[
\frac{\log \lam_1}{\log \varphi} \approx 1.89
\]
slightly exceeds the Cauchy--Schwarz exponent
\[
2 \lam - 1 \approx 1.88.
\]

\subsection*{Notation}
We employ the usual Bachman--Landau and Vinogradov asymptotic notations. For positive-valued functions $f$ and $g$, we write:
\begin{itemize}
\item $f \ll g$, if $|f| \le C|g|$ pointwise, for some constant $C$,
\item $f \asymp g$, if $f \ll g \ll f$,
\item $f \sim g$, if $f/g \to 1$ in some specified limit.
\end{itemize}

\subsection*{Organisation and methods}

In \S \ref{ineqs}, we interpret $V(F_m) - V(F_{m-1})$ as the number of solutions to a system comprising a diophantine equation and two inequalities. A case analysis then leads to a recurrence for $V(F_m)$. We then solve the inhomogeneous linear recurrence in \S \ref{solve}, delivering an exact formula and hence an asymptotic formula for $V(F_m)$. From there, we deduce Theorem \ref{MainThm}.

\subsection*{Funding}
OJ was supported by a URSS bursary from the University of Warwick.

\subsection*{Rights}

For the purpose of open access, the authors have applied a Creative Commons Attribution (CC-BY) licence to any Author Accepted Manuscript version arising from this submission.

\section{A diophantine system}
\label{ineqs}

We start by analysing the variance along the Fibonacci sequence.

\begin{lem} For $m \ge 7$, we have
\begin{align*}
V(F_m) 
&= 2V(F_{m-1}) + 3V(F_{m-2})
- 4V(F_{m-3}) - 2V(F_{m-4}) + 2V(F_{m-5}) \\
&\qquad + 1 - 2 \lfloor m/2 \rfloor.
\end{align*}
\end{lem}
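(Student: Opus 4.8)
The plan is to reinterpret the second moment combinatorially and then to peel off the largest Fibonacci parts. First I would note that $R(n)$ is the number of subsets $S$ of the set of Fibonacci numbers with $\sum_{F \in S} F = n$, so that $R(n)^2$ counts \emph{pairs} $(S,T)$ of such subsets with equal sum $n$. Summing over $0 \le n \le H$ gives the clean description
\[
V(H) = \#\bigl\{ (S,T) : \sum\nolimits_{F\in S} F = \sum\nolimits_{F\in T} F \le H \bigr\},
\]
the number of pairs of equal-sum subsets whose common value is at most $H$. Encoding each subset by its $0$--$1$ indicator coefficients $(a_k),(b_k)$ against the parts $F_k$, this is exactly the solution count of the diophantine equation $\sum_k a_k F_k = \sum_k b_k F_k$; imposing in addition $F_{m-1} < \sum_k a_k F_k \le F_m$ expresses the increment $D(m) := V(F_m) - V(F_{m-1})$ as the number of solutions of the advertised system of one equation and two inequalities.

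Next I would run the case analysis by conditioning on the top coefficients. Since the common value $N$ obeys $N \le F_m$, the degenerate case $F_m \in S \cap T$ forces $N = F_m$ and $S = T = \{F_m\}$, contributing $1$. If exactly one of $S,T$ contains $F_m$, that set must equal $\{F_m\}$ while the other is any representation of $F_m$ avoiding $F_m$; since $R(F_m) = \lfloor m/2\rfloor$, these two symmetric configurations contribute $2(\lfloor m/2\rfloor - 1)$. In the principal case $F_m \notin S \cup T$, both sets use only parts up to $F_{m-1}$, and I would iterate the same dichotomy on $F_{m-1}, F_{m-2}, \dots$: when a part lies in \emph{both} sets one deletes it and rescales, producing an honest lower-order sum (for instance stripping a common $F_{m-1}$ leaves a common residual value in $(0,F_{m-2}]$, contributing $V(F_{m-2}) - 1$), while the asymmetric and boundary branches furnish the remaining shifted $V$-terms. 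The relation $F_k = F_{k-1} + F_{k-2}$ together with the two inequalities controls how far down the indices one must descend before every branch closes, which turns out to be five levels.

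Assembling the contributions yields a linear recurrence for $D(m)$, and hence — after re-expressing $D(m) = V(F_m) - V(F_{m-1})$ — the stated recurrence for $V(F_m)$ with homogeneous coefficients $2,3,-4,-2,2$ and inhomogeneous term $1 - 2\lfloor m/2\rfloor$. The parity term is exactly $2R(F_m) - 1 = 2\lfloor m/2\rfloor - 1$ arising (up to sign, once folded into the $V$-recurrence) from the $F_m$-using cases above, and it is a reassuring consistency check that the characteristic polynomial factors as
\[
x^5 - 2x^4 - 3x^3 + 4x^2 + 2x - 2 = (x^3 - 2x^2 - 2x + 2)(x-1)(x+1),
\]
the roots $\pm 1$ being precisely those matching a linear-plus-alternating forcing term and isolating the dominant root $\lam_1$.

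The hard part will be the bookkeeping in the principal case: controlling the interplay between the window $F_{m-1} < N \le F_m$ and the Zeckendorf-style carrying as successive top parts are stripped, guaranteeing that the branches are pairwise disjoint (no pair $(S,T)$ is counted twice) and that each surviving branch is genuinely a shifted copy of $V$ rather than a near-miss, and pinning down the exact parity-dependent constant rather than merely its order. One must also identify the threshold $m \ge 7$, below which small-index boundary effects corrupt the clean form of the recurrence.
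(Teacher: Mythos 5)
Your opening reduction is exactly the paper's: $R(n)^2$ counts pairs of strictly increasing Fibonacci representations with equal sum, so $V(F_m)-V(F_{m-1})$ counts solutions of the system \eqref{big}, and one conditions on the largest parts. From there, however, your argument has a genuine gap at the step where you claim that, after stripping common top parts, ``the asymmetric and boundary branches furnish the remaining shifted $V$-terms.'' That claim is false as stated, and it is precisely where the work of the lemma lies. When the two representations have \emph{different} top parts (the paper's Cases (3) and (5); e.g.\ $x_s = F_{m-2}$ and $y_t = F_{m-3}$ inside \eqref{case3}), the resulting count is \emph{not} a shifted copy of $V$: the two sides inherit different upper-bound constraints, $x_i < F_{m-2}$ but $y_j < F_{m-3}$, so one is forced to introduce a new auxiliary quantity --- the paper's $w_m$, counting solutions of \eqref{case3a} subject to \eqref{Wcons}. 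Your plan to ``iterate the same dichotomy'' does not close on its own, because each asymmetric branch spawns a further asymmetric branch $w_{m-2}$. The paper's proof terminates only because $w_{m-2}$ arises \emph{twice} in the sub-analysis, once with each sign, and cancels, yielding the closed form
\[
w_m = V(F_{m-3}) - R(F_{m-3}) - R(F_{m-5}) - V(F_{m-5}).
\]
Nothing in your sketch identifies this cancellation, and without it the branching does not stop ``after five levels'' --- that depth is an artefact of the cancellation, not of the identity $F_k = F_{k-1} + F_{k-2}$.

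The parity bookkeeping is also not right as described. You attribute the inhomogeneous term to the $F_m$-using cases, giving $2\lfloor m/2\rfloor - 1$ ``up to sign,'' but the principal case contributes parity terms of its own: in the paper the constant aggregates across all five cases as $-2R(F_{m-1}) - 2R(F_{m-4}) + 2R(F_{m-5}) + 1$, which collapses to $1 - 2\lfloor m/2\rfloor$ only after inserting Carlitz's formula $R(F_n) = \lfloor n/2\rfloor$. That your $F_m$-case total happens to be the negative of the final constant is a numerical coincidence, and the residual parity contribution of the principal case is unaccounted for in your sketch. Since you explicitly defer the disjointness of the branches, the exact parity constant, and the threshold $m \ge 7$ (which in the paper enters concretely, e.g.\ to ensure $F_{m-4} \ne F_{m-5}$ in the analysis of $w_m$), the proposal is a plan rather than a proof: the coefficients $2, 3, -4, -2, 2$ are asserted, not derived, and the factorisation of the characteristic polynomial is a consistency check on the answer, not evidence for the recurrence.
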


We will repeatedly use the fact that 
\[
F_2+F_3+\cdots+F_{m-2} = F_m - 2 < F_m
\]
throughout the proof.

\begin{proof}
Observe that $R(n)^2$ counts solutions to 
\[
x_1 + \cdots + x_s = y_1+ \cdots + y_t=n,
\]
where $x_1<\cdots<x_s$ and $y_1<\cdots<y_t$ are Fibonacci numbers. Therefore $V(F_m)-V(F_{m-1})$ counts solutions to
\begin{equation} \label{big}
F_{m-1} < x_1 + \cdots + x_s = y_1+ \cdots + y_t \le F_m 
\end{equation}
with the same conditions on the $x_i$ and $y_j$.
Note that $x_s$ and $y_t$ must both be at least $F_{m-2}$, since otherwise the sums would be too small. However, it cannot be that one of these is $F_{m-2}$ and the other is $F_m$, since then the sums could not be equal. It is also clear that neither of these can exceed $F_m$. There are thus five possibilities:
\begin{enumerate}
\item $x_s=y_t=F_m$
\item $x_s=y_t=F_{m-1}$
\item $x_s=y_t=F_{m-2}$
\item $\{x_s,y_t\}
=\{F_m,F_{m-1}\}$
\item $\{x_s,y_t\}
=\{F_{m-1},F_{m-2}\}$.
\end{enumerate}

\bigskip

Case $(1)$ can only happen if $s=t=1$, since otherwise the sums will be too big. Exactly one solution arises from this case.

In Case $(2)$, the problem is equivalent to counting solutions to
\[
0 < x_1 + \cdots + x_s = y_1+ \cdots + y_t \le F_{m-2}, 
\]
of which there are 
$V(F_{m-2})-V(0)=V(F_{m-2})-1$. As $s$ and $t$ are arbitrary, we have relabelled these subscripts here.

\bigskip

Case (3) requires significantly more thought. First, we subtract $F_{m-2}$ and relabel subscripts to see that we are counting solutions to 
\begin{equation} \label{case3}
F_{m-3} < x_1 + \cdots + x_s = y_1+ \cdots + y_t \le F_{m-1}
\end{equation} 
for which
\[
x_1 < \dots < x_s <F_{m-2}, 
\qquad
y_1 < \dots < y_t <F_{m-2}.
\]
Thus, we must subtract from 
$V(F_{m-1})-V(F_{m-3})$ the number of solutions to \eqref{case3} in which at least one of $x_s$ or $y_t$ is an element of $\{F_{m-2},F_{m-1}\}$. 
When $x_s=F_{m-1}$, the system (\ref{case3}) becomes
\[
F_{m-1}=y_1+\cdots+y_t,
\]
which has $R(F_{m-1})$ solutions. The case $y_t=F_{m-1}$ is identical, but we do not wish to double-count solutions with $x_s=y_t=F_{m-1}$, so there are $2R(F_{m-1})-1$ solutions to (\ref{case3}) in which at least one of $x_s$ or $y_t$ is $F_{m-1}$. 

Suppose instead that $x_s = F_{m-2}$ and $y_t \ne F_{m-1}$. Then $y_t \in \{F_{m-3},F_{m-2}\}$, for otherwise the sum of the $y_j$ would be below $F_{m-2}$. The case where $y_t=x_s=F_{m-2}$ yields $V(F_{m-3})$ solutions, as we can subtract $F_{m-2}$ from (\ref{case3}). Let $w_m$ count solutions to 
\begin{equation} 
\label{case3a}
F_{m-3} 
< x_1 + \cdots + x_{s-1} + F_{m-2} 
= y_1+ \cdots + y_{t-1} + F_{m-3} 
\le F_{m-1} 
\end{equation} 
for which
\begin{equation}
\label{Wcons}
x_1< \cdots <x_{s-1}<F_{m-2},
\qquad
y_1< \cdots <y_{t-1}<F_{m-3}.
\end{equation}
This is the system that arises upon specialising $x_s=F_{m-2}$ and $y_t=F_{m-3}$ in (\ref{case3}) so, flipping the roles of $x_s$ and $y_t$, we find that the total number of solutions in Case $(3)$ is  \begin{align}
&\notag
V(F_{m-1})-V(F_{m-3}) - 2R(F_{m-1})+1-V(F_{m-3})-2w_m \\
&\label{3total}
= V(F_{m-1})-2V(F_{m-3}) 
- 2R(F_{m-1})+1-2w_m.
\end{align}

We proceed to compute $w_m$.
Observe that in (\ref{case3a}) we must have 
\[
x_1+\cdots+x_{s-1}+F_{m-4}
= y_1+\cdots+y_{t-1},
\]
hence the right sum is at least 
$F_{m-4}$ and so $y_{t-1} \ge 
F_{m-5}$. The conditions on the $y_j$ mean that the only possibilities for $y_{t-1}$ are $F_{m-4}$ and $F_{m-5}$ (which are distinct, as $m \ge 7$). 

The number of solutions for which 
$y_{t-1}=F_{m-4}$ is the same as the number of solutions to 
\begin{equation} \label{case3ai}
x_1+\cdots+x_s=y_1+\cdots+y_t \le F_{m-3}
\end{equation} 
for which 
\[
x_1 < \dots < x_s,
\qquad
y_1 < \dots < y_t < F_{m-4}.
\]
The total number of solutions to (\ref{case3ai}) is $V(F_{m-3})$; however, we must discount the solutions for which 
$y_t \in \{ F_{m-3}, F_{m-4} \}$. It is easy to see that the number of solutions to (\ref{case3ai}) in which $y_t=F_{m-3}$ is $R(F_{m-3})$. Instead, replacing $y_t$ with $F_{m-4}$ gives three possibilities for $x_s$, namely $F_{m-3}$, $F_{m-4}$ and $F_{m-5}$. After manipulating (\ref{case3ai}), we find that these give $R(F_{m-5})$, $V(F_{m-5})$ and $w_{m-2}$ solutions respectively. Indeed, if $x_s=F_{m-5}$ then the system becomes 
\[
x_1+\cdots+x_{s-1}+F_{m-5}
=y_1+\cdots+y_{t-1}+F_{m-4} 
\le F_{m-3},
\]
but the right sum is guaranteed to exceed $F_{m-5}$ (recall again that $m \ge 7$ so $F_{m-4}>F_{m-5}$), so every solution to this corresponds uniquely to a solution of (\ref{case3a}) with $m$ replaced by $m-2$ and the roles of $x$ and $y$ switched. 

We now return to (\ref{case3a}) and count solutions for which 
$y_{t-1}=F_{m-5}$.
By \eqref{case3a} and \eqref{Wcons}, we have
\[
F_{m-5} 
< x_1 + \cdots + x_{s-1} + F_{m-4} 
= y_1 + \cdots + y_{t-2} + F_{m-5}
\le F_{m-2}
\]
and
\[
x_1<\cdots<x_{s-1}<F_{m-2},
\qquad
y_1<\cdots<y_{t-2}<F_{m-5}.
\]
If $x_{s-1} \ge F_{m-4}$ then the sum of the $x_i$ will be too large to equal the sum of the $y_j$. Moreover, the right sum is never greater than $F_{m-3}$. Thus, the number of solutions is simply $w_{m-2}$, whence
\begin{align*}
w_m &=
V(F_{m-3})-R(F_{m-3})
-(R(F_{m-5})+V(F_{m-5})+w_{m-2})
+ w_{m-2}
\\ &= 
V(F_{m-3})-R(F_{m-3})
-R(F_{m-5})-V(F_{m-5}).  
\end{align*}
Substituting this into (\ref{3total}), the total number of solutions in Case (3) is
\[
V(F_{m-1})-4V(F_{m-3})+2V(F_{m-5})
-2R(F_{m-1})+2R(F_{m-3})+2R(F_{m-5})
+1.
\]

\bigskip

Now we can move onto Case $(4)$. Notice that if $x_s=F_m$ and $y_t=F_{m-1}$, then $s=1$ and we can rearrange to find that there are $R(F_{m-2})$ solutions. Flipping $x_s$ and $y_t$ yields a total of $2R(F_{m-2})$ possibilities.

Lastly, for Case $(5)$, consider the original system (\ref{big}) with $x_s=F_{m-1}$ and $y_t=F_{m-2}$. The number of possibilities here is $w_{m+1}$ minus the number of solutions where the sums are in the range $\left(F_{m-2},F_{m-1}\right]$. However, if the sums are in this range then $s=1$ so, subtracting $F_{m-2}$ from both sides, we find that there are $R(F_{m-3})$ solutions. Hence, the total number of solutions in Case (5) is
\begin{align*}
&2(w_{m+1}-R(F_{m-3})) \\
&= 2(
V(F_{m-2}) - R(F_{m-2}) - R(F_{m-3})
- R(F_{m-4}) - V(F_{m-4})
).
\end{align*}

\bigskip

The total number of solutions across Cases (1), (2) and (4) is 
\[
V(F_{m-2}) + 2R(F_{m-2}).
\]
The total number of solutions across all cases is therefore
\begin{align*}
&V(F_m) - V(F_{m-1}) \\
&= V(F_{m-2}) + 2R(F_{m-2}) 
+ V(F_{m-1})-4V(F_{m-3})
\\ & \quad
+2V(F_{m-5})
-2R(F_{m-1})+2R(F_{m-3})+2R(F_{m-5}) +1 
\\ & \quad
+ 2(
V(F_{m-2}) - R(F_{m-2}) - R(F_{m-3})
- R(F_{m-4}) - V(F_{m-4})
),
\end{align*}
whence
\begin{align*}
V(F_m) &= 2V(F_{m-1}) + 3V(F_{m-2})
- 4V(F_{m-3}) - 2V(F_{m-4}) + 
2V(F_{m-5}) \\
&\qquad - 2R(F_{m - 1}) - 2R(F_{m - 4}) + 2R(F_{m - 5}) + 1.
\end{align*}

Finally, we insert Carlitz's \cite[Theorem 2]{Car1968} formula
\[
R(F_n) = \lfloor n/2 \rfloor
\qquad (n \ge 2).
\]
As $m \ge 7$, this gives
\begin{align*}
V(F_m) &= 2V(F_{m-1}) + 3V(F_{m-2})
- 4V(F_{m-3}) - 2V(F_{m-4}) + 
2V(F_{m-5}) \\
&\qquad - 
2\left \lfloor \frac{m-1}{2}
\right \rfloor
- 2 \left \lfloor
\frac{m-4}{2} \right \rfloor 
+ 2 \left \lfloor \frac{m-5}{2}
\right \rfloor + 1 \\
&= 2V(F_{m-1}) + 3V(F_{m-2})
- 4V(F_{m-3}) - 2V(F_{m-4}) + 
2V(F_{m-5}) \\
&\qquad + 1 - 2 \lfloor m/2 \rfloor,
\end{align*}
as claimed.
\end{proof}

\section{Solving the inhomogeneous linear recurrence}
\label{solve}

We write $v_m = V(F_m)$ for 
$m \ge 2$. Let
\[
\lam_1 \approx 2.48,
\qquad
\lam_2 \approx -1.17,
\qquad
\lam_3 = 1,
\qquad
\lam_4 = -1,
\qquad
\lam_5 \approx 0.69
\]
be the roots of the characteristic polynomial
\begin{align*}
\chi(x) 
&= x^5 - 2x^4 - 3x^3 + 4x^2 + 2x - 2 \\
&= (x-1)(x+1)(x^3-2x^2-2x+2).
\end{align*}
These roots generate a cubic field $K$. Further, note that $\lam_1$ is the same as it is in Theorem \ref{MainThm}. We wish to solve the inhomogeneous linear recurrence
\begin{equation}\label{vRec}
v_m = 2v_{m-1} + 3v_{m-2}
- 4v_{m-3} - 2v_{m-4} + 2v_{m-5} + 1 - 2 \lfloor m/2 \rfloor \qquad (m \ge 7)
\end{equation}
with initial data
\begin{equation}
\label{InitialData}
(v_2, v_3, v_4, v_5, v_6)
= (2, 3, 7, 12, 26).
\end{equation}

Let
\[
V = \begin{pmatrix}
\lam_1^2 & \lam_2^2 &
\lam_3^2 & \lam_4^2 & \lam_5^2 \\
\lam_1^3 & \lam_2^3 &
\lam_3^3 & \lam_4^3 & \lam_5^3 \\
\lam_1^4 & \lam_2^4 &
\lam_3^4 & \lam_4^4 & \lam_5^4 \\
\lam_1^5 & \lam_2^5 &
\lam_3^5 & \lam_4^5 & \lam_5^5 \\
\lam_1^6 & \lam_2^6 &
\lam_3^6 & \lam_4^6 & \lam_5^6
\end{pmatrix}.
\]
This matrix is invertible, as its determinant is $(\lam_1 \cdots \lam_5)^2$ times that of a Vandermonde matrix with pairwise distinct parameters. Define coefficients 
$c_1, \ldots, c_5 \in K$ by
\[
\begin{pmatrix}
c_1 \\
c_2 \\
c_3 \\
c_4 \\
c_5
\end{pmatrix}
= V^{-1} 
\begin{pmatrix}
1 \\
9/4 \\
3 \\
33/4 \\
17
\end{pmatrix} \approx
\begin{pmatrix}
0.0735 \\
-0.467
 \\
 0.625
 \\
 0.375
 \\
 0.394
\end{pmatrix}.
\]
For $m \ge 2$, we write
\[
\eps_m = m - 2 \lfloor
m/2 \rfloor
\]
for the remainder when $m$ is divided by two.

\begin{thm} For $m \ge 2$, we have
\[
v_m = \left(
\sum_{i \le 5} c_i \lam_i^m
\right) + \frac{m^2}{4}
- \frac{m \eps_m}{2}.
\]
\end{thm}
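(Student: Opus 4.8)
The plan is to solve the inhomogeneous recurrence \eqref{vRec} in the classical way: produce one particular solution $p_m$, add the general solution of the associated homogeneous recurrence, and then fix the free constants using the initial data \eqref{InitialData}. Since $\chi$ has the five distinct roots $\lam_1,\dots,\lam_5$, the homogeneous recurrence $u_m = 2u_{m-1} + 3u_{m-2} - 4u_{m-3} - 2u_{m-4} + 2u_{m-5}$ has general solution $u_m = \sum_{i \le 5} c_i \lam_i^m$, so it suffices to exhibit a single particular solution of \eqref{vRec} and then argue that the claimed expression is the solution matching the given initial values.

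For the particular solution I would first rewrite the forcing term. Using $\eps_m = (1 - (-1)^m)/2$ we have
\[
1 - 2\lfloor m/2 \rfloor = 1 - (m - \eps_m) = \tfrac32 - m - \tfrac12 (-1)^m,
\]
a first-degree polynomial in $m$ plus a period-two oscillation. The key observation is that $1$ and $-1$ are both simple roots of $\chi$, so both pieces of the forcing term are resonant: the polynomial part calls for an ansatz one degree higher, namely $\alpha m^2 + \beta m$, and the oscillatory part calls for $\gamma\, m (-1)^m$. Substituting $p_m = \alpha m^2 + \beta m + \gamma m (-1)^m$ into \eqref{vRec} and matching the coefficients of $m$, of $1$, and of $(-1)^m$ — the leading terms cancel precisely because $\chi(1) = \chi(-1) = 0$, and the surviving coefficients are governed by the (nonzero) derivatives $\chi'(1)$ and $\chi'(-1)$ — should pin down $\alpha = 1/4$, $\beta = -1/4$, $\gamma = 1/4$. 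Reassembling via $\eps_m$ then gives exactly $p_m = m^2/4 - m\eps_m/2$. I expect the verification that this $p_m$ genuinely satisfies \eqref{vRec} to be the main obstacle — not because it is deep, but because the floor/parity forcing makes the bookkeeping delicate. The cleanest route is probably to check the residue classes $m$ even and $m$ odd separately, so that each $\eps_{m-k}$ for $k \in \{0,1,2,3,4,5\}$ is explicit.

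With $p_m$ in hand, every solution of \eqref{vRec} has the form $\sum_{i \le 5} c_i \lam_i^m + p_m$, and such a solution is determined by any five consecutive values. I would therefore compute $v_m - p_m$ for $m = 2,\dots,6$ from \eqref{InitialData}; a short calculation (for instance $v_3 - p_3 = 3 - (9/4 - 3/2) = 9/4$) shows these equal $1,\ 9/4,\ 3,\ 33/4,\ 17$, which are precisely the entries of the column vector used to define $c_1,\dots,c_5$. Since that definition reads $(c_1,\dots,c_5)^\top = V^{-1}(1, 9/4, 3, 33/4, 17)^\top$ and $V$ is invertible, we obtain $\sum_{i \le 5} c_i \lam_i^m = v_m - p_m$ for each $m \in \{2,\dots,6\}$.

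Finally I would conclude by uniqueness. Both $v_m$ and $\sum_{i\le5} c_i\lam_i^m + p_m$ satisfy \eqref{vRec} for all $m \ge 7$ and agree for $m = 2,\dots,6$; since \eqref{vRec} determines each term from its five predecessors, a one-line induction yields equality for all $m \ge 2$, which is the claimed formula.
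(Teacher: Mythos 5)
Your proposal is correct, and it follows the paper's overall strategy exactly: general homogeneous solution from the five distinct roots of $\chi$, the particular solution $p_m = \frac{m^2}{4} - \frac{m\eps_m}{2}$, and matching the five free constants to the initial data \eqref{InitialData}, which is precisely what the definition $(c_1,\ldots,c_5)^\top = V^{-1}(1, 9/4, 3, 33/4, 17)^\top$ encodes. The one genuine difference is how the particular solution is produced. The paper keeps the floor function and makes a parity-piecewise ansatz: quadratics $a_m$ (even $m$) and $b_m$ (odd $m$) satisfying a coupled pair of recurrences, with the coefficients $k_1 = \ell_1 = \frac14$, $k_2 = 0$, $\ell_2 = -\frac12$ found by equating coefficients. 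You instead rewrite the forcing term as $1 - 2\lfloor m/2 \rfloor = \frac32 - m - \frac12(-1)^m$ and invoke resonance: since $1$ and $-1$ are simple roots of $\chi$ (neither is a root of the cubic factor), the ansatz $\alpha m^2 + \beta m + \gamma m(-1)^m$ is the right one, with constant and pure $(-1)^m$ terms correctly omitted as they are homogeneous solutions. Your numbers check out: writing $L$ for the operator $f_m \mapsto f_m - 2f_{m-1} - 3f_{m-2} + 4f_{m-3} + 2f_{m-4} - 2f_{m-5}$, one computes $L[m] = -2$, $L[m^2] = 4 - 4m$ and $L[m(-1)^m] = -2(-1)^m$ (the linear-in-$m$ leading terms vanish because $\chi(\pm 1) = 0$, and the surviving coefficients are $-\chi'(\pm 1) = 2$), giving $\alpha = \frac14$, $\beta = -\frac14$, $\gamma = \frac14$, which is exactly $p_m = \frac{m^2}{4} - \frac{m\eps_m}{2}$; likewise $v_m - p_m = 1, 9/4, 3, 33/4, 17$ for $m = 2, \ldots, 6$ is correct, and your closing uniqueness-by-induction step is the same as the paper's (implicit) one. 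What each route buys: your resonance framing explains \emph{a priori} why the polynomial degree must be raised and reduces the computation to three scalar equations, while the paper's parity-split avoids any manipulation of $(-1)^m$ and verifies the ansatz by direct substitution in each residue class; the two ansatz spaces are equivalent, so this is a difference of bookkeeping rather than of substance.
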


\begin{proof} We follow the standard approach \cite[Chapter 2]{GK1990}. The associated homogeneous recurrence is
\[
u_m = 2u_{m-1} + 3u_{m-2}
- 4u_{m-3} - 2u_{m-4} + 
2u_{m-5}, 
\]
which has characteristic polynomial $\chi$. The general solution to this is
\[
u_m = \sum_{i \le 5} C_i \lam_i^m,
\] 
where $C_1, \ldots, C_5$ are arbitrary constants.

\bigskip

Next, we search for a particular solution to (\ref{vRec}). The floor function is unwieldy, with $\lfloor{\frac{m}{2}}\rfloor$ being $\frac{m}{2}$ if $m$ is even but $\frac{m-1}{2}$ if $m$ is odd. This motivates looking for a particular solution which is defined piecewise depending on the parity of $m$. To this end, we seek functions $m \mapsto a_m$ and $m \mapsto b_m$ 
such that 
\[
a_m = 2b_{m-1} + 3a_{m-2} 
- 4b_{m-3}-2a_{m-4}+2b_{m-5}
+ 1 - m
\] 
and
\[
b_m = 2a_{m-1} + 3b_{m-2} 
- 4a_{m-3} - 2b_{m-4} 
+ 2a_{m-5} + 2 - m.
\]
If we can find two such functions, then a particular solution to \eqref{vRec} will be given by
\[
v_m = \begin{cases}
a_m, &\text{if } 2 \mid m \\
b_m, &\text{if } 2 \nmid m.
\end{cases}
\]

By comparing coefficients, we see that two linear polynomials will not work. We therefore try quadratic polynomials
\[
a_m = k_1 m^2 + k_2 m + k_3,
\qquad
b_m = \ell_1 m^2 + \ell_2 m 
+ \ell_3.
\]
Substituting these into our equations and equating coefficients, we see that this will work provided that 
\begin{align*}
k_2 &= 4k_1 + k_2 - 1, \\
k_3 &= 16\ell_1 - 20k_1 
+ 2k_2 + k_3 + 1
\end{align*}
and
\begin{align*}
\ell_2 &= 4\ell_1+ \ell_2-1, \\
\ell_3 &= 16k_1 - 20\ell_1 +2\ell_2 + \ell_3+2.
\end{align*} 
It is then immediate that 
\[
k_1 = \ell_1 = \frac{1}{4}, 
\qquad
k_2=0,
\qquad \ell_2 = -\frac{1}{2}
\]
works. We can also take $k_3 = \ell_3 = 0$, since there are no requirements on these. Consequently, a particular solution of (\ref{vRec}) is $v_m = \frac{m^2}{4}-\frac{m\varepsilon_m}{2}$. 

\bigskip

Adding this to our general homogeneous solution, we glean that the general solution to the inhomogeneous recurrence (\ref{vRec}) is given by
\[
v_m = 
\left( \sum_{i \le 5} C_i \lam_i^m \right) +
\frac{m^2}{4}-\frac{m\varepsilon_m}{2}.
\]
Finally, by definition, the $c_i$ satisfy the initial data \eqref{InitialData}, so the choice
\[
C_i = c_i \qquad (1 \le i \le 5)
\]
gives the result.
\end{proof}

As $\lam_1$ is the dominant root of $\chi$, i.e. the unique root with greatest absolute value, we obtain the following asymptotic formula for $V(F_m)$.

\begin{cor} We have
\[
V(F_m) \sim c_1 \lam_1^m
\qquad (m \to \infty).
\]
\end{cor}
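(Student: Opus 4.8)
The plan is to extract the asymptotic directly from the exact formula
\[
v_m = c_1 \lam_1^m + \sum_{i=2}^5 c_i \lam_i^m + \frac{m^2}{4} - \frac{m \eps_m}{2}
\]
of the preceding Theorem, exploiting that $\lam_1$ is the unique root of $\chi$ of greatest modulus. Concretely, I would divide through by $c_1 \lam_1^m$ and verify that each of the remaining summands is $o(c_1 \lam_1^m)$ as $m \to \infty$; the claim $v_m \sim c_1 \lam_1^m$ is then immediate. Everything hinges on two facts: that $\lam_1$ strictly dominates the other roots in absolute value, and that $c_1 \ne 0$.

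For the strict dominance I would locate the roots of the cubic factor $x^3 - 2x^2 - 2x + 2$ by sign changes. Evaluating at consecutive integers shows that this cubic has one root in $(2,3)$, one in $(0,1)$ and one in $(-2,-1)$, so that $\lam_1 > 2$ while $|\lam_2| < 2$ and $|\lam_5| < 1$; together with $|\lam_3| = |\lam_4| = 1$, this yields $\lam_1 > 2 > |\lam_i|$ for every $i \ge 2$. Consequently $|\lam_i / \lam_1| < 1$, so $(\lam_i/\lam_1)^m \to 0$ and hence $c_i \lam_i^m = o(\lam_1^m)$ for $2 \le i \le 5$. Since $\lam_1 > 1$, exponential growth also beats the polynomial correction: using $0 \le \eps_m \le 1$ we get $m^2/\lam_1^m \to 0$ and $m \eps_m / \lam_1^m \to 0$, so the term $\frac{m^2}{4} - \frac{m\eps_m}{2}$ is likewise $o(\lam_1^m)$.

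The one substantive point, and the only possible obstacle, is to confirm $c_1 \ne 0$, since if $c_1$ vanished the dominant behaviour would be governed by a smaller root and the asserted equivalence would fail. I would settle this using the Cauchy--Schwarz lower bound $V(H) \gg H^{2\lam - 1}$ recorded in the introduction. Because $F_m \asymp \varphi^m$ and $\varphi^\lam = 2$, this gives $v_m = V(F_m) \gg (4/\varphi)^m$, an exponential rate with $4/\varphi \approx 2.47$ exceeding $|\lam_i|$ for all $i \ge 2$. Were $c_1 = 0$, the exact formula would force $v_m = O(|\lam_2|^m + m^2) = o((4/\varphi)^m)$, a contradiction. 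Hence $c_1 \ne 0$, and dividing the exact formula by $c_1 \lam_1^m$ and letting $m \to \infty$ completes the proof.
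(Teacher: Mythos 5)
Your proof is correct, and it diverges from the paper in one substantive and interesting way. The paper derives the corollary in a single line from the exact formula $v_m = \sum_{i \le 5} c_i \lam_i^m + \frac{m^2}{4} - \frac{m\eps_m}{2}$, using only that $\lam_1$ is the unique root of $\chi$ of greatest modulus; the non-vanishing of $c_1$, which you rightly identify as the only possible obstacle, is handled implicitly by the explicit numerical computation $c_1 \approx 0.0735$ coming from the matrix inversion $V^{-1}$ in \S 3. You share the paper's main mechanism (dominant-root extraction from the exact formula, with the sign-change localization of the roots of $x^3 - 2x^2 - 2x + 2$ making the dominance $\lam_1 > 2 > |\lam_i|$ rigorous rather than merely numerical), but your treatment of $c_1 \ne 0$ is genuinely different and computation-free: from $V(H) \gg H^{2\lam - 1}$, $F_m \asymp \varphi^m$ and $\varphi^\lam = 2$ you get $v_m \gg (4/\varphi)^m$ with $4/\varphi \approx 2.47$, and since $c_1 = 0$ would force $v_m = O(|\lam_2|^m + m^2)$ with $|\lam_2| < 2 < 4/\varphi$, the Cauchy--Schwarz lower bound rules it out. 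All the estimates check: the sign evaluations place roots in $(2,3)$, $(0,1)$ and $(-2,-1)$ as you claim, and the arithmetic $\varphi^{2\lam - 1} = 4/\varphi$ is right. What the two approaches buy: the paper's route is shorter because the value of $c_1$ is computed anyway for the exact formula, while yours is more robust --- it would survive in settings where the coefficients cannot be evaluated in closed form, and it explains structurally \emph{why} $c_1$ cannot vanish (the variance is already forced to grow faster than every subdominant mode), rather than verifying it by inspection of a decimal.
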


As $F_{m+1} \asymp F_m \asymp \varphi^m$, and $V(\cdot)$ is monotonic, we thus obtain Theorem~ \ref{MainThm}.

\end{document}